\theoremstyle{plain}
\newtheorem{lemma}{Lemma}[section]
\newtheorem{theorem}[lemma]{Theorem}
\newtheorem{exam}[lemma]{\normalfont \scshape
 Example}
\newcommand{\R}{\mathbb{R}}
\newcommand{\N}{\mathbb{N}}
\newcommand{\norm}[1]{\left\Vert#1\right\Vert}
\newcommand{\abs}[1]{\left\vert#1\right\vert}
\newcommand{\set}[1]{\left\{#1\right\}}
\newcommand{\bfzero}{\bm{0}}
\newcommand{\party}{\frac{\partial}{\partial y}C(x,y)}
\begin{document}

\title[Independence of Bivariate Order Statistics]{Asymptotic Independence of Bivariate Order Statistics}
\author{Michael Falk and Florian Wisheckel}
\address{University of W\"{u}rzburg,
Institute of Mathematics,  Emil-Fischer-Str. 30, 97074 W\"{u}rzburg, Germany.}
\email{michael.falk@uni-wuerzburg.de, florian.wisheckel@uni-wuerzburg.de}

\subjclass[2010]{Primary 62G30, secondary 60E05, 62H10}%
\keywords{Multivariate order statistics, intermediate order statistics, copula, asymptotic independence}%


\begin{abstract}
It is well known that an extreme order statistic and a central order statistic (os) as well as an intermediate os and a central os from a sample of iid univariate random variables get asymptotically independent as the sample size increases. We extend this result to bivariate random variables, where the os are taken componentwise. An explicit representation of the conditional distribution of bivariate os turns out to be a powerful tool.
\end{abstract}

\maketitle

\section{Introduction}

Let $U_1,\dots,U_n$ be independent copies of a univariate random variable (rv) $U$ and denote by $U_{1:n}\le U_{2:n}\le \dots \le U_{n:n}$ the pertaining order statistics (os). It follows from Theorem 1.3 in \citet{falkre88} that there exists a universal constant such that for $1\le r\le n-k+1\le n$ and $n\in\N$
\begin{align}\label{eqn:asymptotic_independence_of_univariate_rv}
&\sup_{x,y\in\R}\abs{P(U_{r:n}\le x,U_{n-k+1:n}\le y)- P(U_{r:n}\le x)P(U_{n-k+1:n}\le y)}\\
&\le \mbox{const} \left(\frac{rk}{n(n-r-k+1)}\right)^{1/2}.\nonumber
\end{align}
This upper bound converges to $0$ if we consider a sequence $r = r(n)$ that satisfies $r/n\to_{n\to\infty}\lambda\in(0,1)$ together with $k=k(n)\to_{n\to\infty}\infty$, $k/n\to_{n\to\infty}0$. Then $(U_{r:n})$ is a sequence of central os, $(U_{n-k+1:n})$ a sequence of intermediate os and the limiting $0$ shows that they become asymptotically independent. The same holds for an intermediate sequence $r = r(n)$ together with fixed $k$, i.e., extreme os.

Starting with the work by \citet{gumbel46} on extremes, the asymptotic independence of order statistics has been investigated in quite a few articles. For detailed references we refer to \citet[p. 150]{gal87} and to \citet{falkko86}.

By the quantile transformation theorem (see, e.g. \citet[Lemma 1.2.4]{reiss89}) we can assume without loss of generality in the preceding result \eqref{eqn:asymptotic_independence_of_univariate_rv} that $U$ follows the uniform distribution on $(0,1)$.

Let $(U_1,V_1),\dots,(U_n,V_n)$ be independent copies of the bivariate rv $(U,V)$ that follows a copula, $C$ say, i.e., $U$ and $V$ are both uniformly distributed on $(0,1)$. Choose $r,k\in\set{1,\dots,n}$ and consider the vector $(U_{r:n},V_{k:n})$ of componentwise  os, called \emph{bivariate os}. In this paper we investigate the problem, whether asymptotic independence also holds for $(U_{r:n},V_{k:n})$ with proper sequences $r=r(n)$, $k=k(n)$.

Note that, for example, $U_{r:n}$ and $U_{n-r+1:n}$ with $r$ fixed get by inequality \eqref{eqn:asymptotic_independence_of_univariate_rv} asymptotically independent, but $U_{r:n}$ and $V_{n-r+1:n}$ might not. Consider $(U,V):= (U,1-U)$. Then the joint distribution of $(U,V)$ is a copula as well but $V_{n-r+1:n}=1-U_{r:n}$.

For $r=k=n$, the asymptotic joint distribution of $(U_{r:n},V_{k:n})$ is provided by multivariate extreme value theory. Precisely, if $n(U_{r:n}-1,V_{k:n}-1)$ has a non degenerate limit distribution $G$, say, then this limit has the representation
\[
G(x,y)=\exp\left(-\norm{(x,y)}_D\right),\qquad x,y\le 0,
\]
where $\norm\cdot_D$ is a particular norm on $\R^2$, called \emph{$D$-norm}, see, e.g., \citet[Section 4.4]{fahure10}. Current articles include \citet{aulfahozo14}, \citet{aulfazo14} and \citet{falk15}.

The limit distribution of $n(U_{n-i+1:n}-1, V_{n-j+1:n}-1)$ with fixed $i,j$ was established by \citet{gal75}. The set of limiting distributions in the intermediate case $(U_{n-k:n},V_{n-r:n})$ with $k=k(n)$, $r=r(n)$ both converging to infinity as $n$ increases, but $(k+r)/n\to_{n\to\infty}0$, was identified by \citet{chedehya97}. If in particular $n(U_{n:n}-1,V_{n:n}-1)$ converges in distribution to $G$ as above, then $(n/\sqrt k)\left(U_{n-k:n}-(n-k)/(n+1), V_{n-k:n}-(n-k)/(n+1)\right)$ follows asymptotically the bivariate normal distribution with mean vector $\bfzero\in\R^2$ and covariance matrix $\left(\begin{smallmatrix}1&2-\norm{(1,1)}_D\\2-\norm{(1,1)}_D&1\end{smallmatrix}\right)$ as shown by
\citet{falkwi16}. Asymptotic normality of $(U_{r:n},V_{k:n})$ in the central case, where $r/n\to_{n\to\infty}\lambda_1$, $k/n\to_{n\to\infty}\lambda_2$, $0<\lambda_1,\lambda_2<1$, is established in \citet{reiss89}.

In this paper we establish
\[
\sup_{x,y\in\R}\abs{P(U_{r:n}\le x,V_{k:n}\le y)- P(U_{r:n}\le x)P(V_{k:n}\le y)}\to_{n\to\infty}0,
\]
for various choices of $r=r(n)$ and $k=k(n)$. It turns out that for such sequences asymptotic independence holds with no further assumptions on the copula $C$. The main tool will be Lemma \ref{lem:conditional_expectation_of_multi_os_I}, in which the conditional distribution function (df) $P(U_{r:n}\le x\mid V_{k:n}=y)$ is derived for arbitrary $r,k\in\set{1,\dots,n}$. This powerful tool should be of interest of its own.

\section{Conditional Expectation of Bivariate OS}

In this section we compute as a major tool $P(U_{m:n}\le x\mid V_{k:n}=y)$ for arbitrary $m,k\in\set{1,\dots,n}$. For the formulation of Lemma \ref{lem:conditional_expectation_of_multi_os_I} and its proof it is quite convenient to explicitly quote Theorem 2.2.7 in \citet{nel06}.

\begin{theorem}[\citet{nel06}]\label{theo:bounds_for_partial_derivatives_of_copula}
Let $C$ be an arbitrary bivariate copula. For any $x\in[0,1]$, the partial derivative $\party$ exists for almost all $y$, and for such $x$ and $y$
\begin{equation}\label{eqn:bound_for_partial_derivative_of_copula}
0\le \party\le 1.
\end{equation}
Furthermore, the function $x\mapsto\party$ is defined and nondecreasing almost everywhere on $[0,1]$.
\end{theorem}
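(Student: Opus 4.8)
The plan is to derive everything from the defining properties of a copula $C$: that it is grounded, so $C(x,0)=C(0,y)=0$, has uniform margins, so $C(x,1)=x$ and $C(1,y)=y$, and is $2$-increasing, i.e.
\[
C(x_2,y_2)-C(x_2,y_1)-C(x_1,y_2)+C(x_1,y_1)\ge 0
\]
whenever $x_1\le x_2$ and $y_1\le y_2$. First I would show that for each fixed $x$ the section $y\mapsto C(x,y)$ is nondecreasing and Lipschitz continuous with constant $1$. Applying the $2$-increasing inequality to the rectangle $[0,x]\times[y_1,y_2]$ together with $C(0,\cdot)=0$ gives $C(x,y_2)-C(x,y_1)\ge 0$, while applying it to $[x,1]\times[y_1,y_2]$ together with $C(1,y)=y$ gives $C(x,y_2)-C(x,y_1)\le y_2-y_1$, both for $y_1\le y_2$. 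Combining the two bounds in each variable also shows that $C$ is jointly Lipschitz, hence continuous.

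From here the existence claim and the bound \eqref{eqn:bound_for_partial_derivative_of_copula} are classical. For fixed $x$, monotonicity of $y\mapsto C(x,y)$ yields, by Lebesgue's theorem on the differentiability of monotone functions, that $\party$ exists for almost all $y$. Because every forward difference quotient $h^{-1}\left(C(x,y+h)-C(x,y)\right)$ with $h>0$ lies in $[0,1]$ by the two Lipschitz bounds, its limit lies in $[0,1]$ wherever it exists, and this gives $0\le\party\le 1$.

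For the monotonicity of $x\mapsto\party$ the crucial point is that the $2$-increasing property forces each difference quotient to be nondecreasing in $x$: for $x_1\le x_2$ and $h>0$,
\[
\frac{C(x_2,y+h)-C(x_2,y)}{h}-\frac{C(x_1,y+h)-C(x_1,y)}{h}\ge 0,
\]
since the left-hand side equals $1/h$ times the $C$-volume of the rectangle $[x_1,x_2]\times[y,y+h]$. Letting $h$ run through a sequence $h_n\downarrow 0$, the pointwise limit of nondecreasing functions is again nondecreasing, so $x\mapsto\party$ is nondecreasing on the set of $x$ where the derivative exists.

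The step I expect to be the real obstacle is reconciling the quantifiers. The first part delivers ``for each $x$, for almost all $y$,'' whereas the last assertion requires that for almost every fixed $y$ the map $x\mapsto\party$ be defined for almost all $x$. I would bridge this with a Fubini argument: as $C$ is continuous, the set $E=\set{(x,y):\party\text{ exists}}$ is Borel measurable, and all its $x$-sections have full measure by the first part, so $E$ has full planar measure; hence for almost all $y$ its $y$-section is also of full measure in $x$. Fixing such a $y$ and combining the difference-quotient monotonicity above with pointwise convergence along $h_n\downarrow 0$ then shows that $x\mapsto\party$ is defined almost everywhere and nondecreasing, as claimed. Care is needed precisely here, so as not to confuse the two senses of ``almost everywhere.''
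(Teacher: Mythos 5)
Your proof is correct, but the comparison here is unusual: the paper does not prove this statement at all --- it is quoted, with citation, as Theorem 2.2.7 of \citet{nel06}, so the only proof to compare against is Nelsen's. Your argument is essentially that standard textbook proof: groundedness plus the $2$-increasing property give that $y\mapsto C(x,y)$ is nondecreasing and $1$-Lipschitz; Lebesgue's theorem on differentiability of monotone functions gives existence of $\party$ for almost all $y$ for each fixed $x$; the forward difference quotients lying in $[0,1]$ give the bounds \eqref{eqn:bound_for_partial_derivative_of_copula}; and nonnegativity of the $C$-volume of $[x_1,x_2]\times[y,y+h]$ makes the difference quotients nondecreasing in $x$, a property that survives the passage to the limit $h_n\downarrow 0$. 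Where you go beyond the cited source is the last step: Nelsen's proof is terse about converting ``for each $x$, for almost all $y$'' into a statement valid for almost every fixed $y$, and your Fubini argument --- Borel measurability of the exceptional set (via continuity of $C$ and difference quotients along rational $h$), full planar measure, hence almost every $y$-section of full measure --- supplies exactly the missing bookkeeping. That refinement is not idle, since the paper later evaluates $\party$ at points $y=1+z/n$ under an integral in $z$ (in the proofs of Lemma \ref{lem:conditional_expectation_of_multi_os_I} and of the main theorem), which is precisely the ``for almost all $y$, for almost all $x$'' reading your argument justifies.
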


Now we are ready to state our major tool: we show that the conditional distribution $P(U_{m:n}\le x\mid V_{k:n}=y)$ is the linear combination of two probabilities concerning sums of independent Bernoulli rv.  We set, as usual, $U_{0:n}=V_{0:n}=0$ and $U_{n+1:n}=V_{n+1:n}=1$

\begin{lemma}\label{lem:conditional_expectation_of_multi_os_I}
Let $(U_1,V_1),\dots,(U_n,V_n)$, $n\in\N$, be independent copies of a rv $(U,V)$ that follows a copula $C$. Then we obtain for $1\le k,m\le n$ and for almost every $x,y\in[0,1]$
\begin{align}\label{eqn:representation_of_condition_expectation}
&P(U_{m:n}\le x\mid V_{k:n}=y)\nonumber\\
&= P\left(\sum_{i=1}^{k-1}1_{[0,x]}\left(U_i^{(1)}\right) + \sum_{i=1}^{n-k}1_{[0,x]}\left(U_i^{(2)}\right)\ge m\right)\nonumber\\
&\hspace*{1cm}+ \party P\left(\sum_{i=1}^{k-1}1_{[0,x]}\left(U_i^{(1)}\right) + \sum_{i=1}^{n-k}1_{[0,x]}\left(U_i^{(2)}\right)= m-1\right)
\end{align}
where $U_1^{(1)},\dots,U_k^{(1)},U_1^{(2)},\dots,U_{n-k}^{(2)}$ are independent rv with
\[
P\left(U_i^{(1)}\le u\right) = P(U\le u\mid V\le y) = \frac{C(u,y)}y
\]
and
\[
P\left(U_i^{(2)}\le u \right)  = P(U\le u\mid V> y) = \frac{u-C(u,y)}{1-y},\qquad 0\le u\le 1.
\]
\end{lemma}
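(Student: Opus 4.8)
The plan is to compute the (defective) joint ``density'' of $\{U_{m:n}\le x\}$ together with $V_{k:n}$ with respect to the distribution of $V_{k:n}$, and then to divide by the latter. Since $V$ is uniform on $(0,1)$, the order statistic $V_{k:n}$ has the Beta density $f_{k:n}(y)=n\binom{n-1}{k-1}y^{k-1}(1-y)^{n-k}$, so it suffices to identify the subdensity $g$ determined by $P(U_{m:n}\le x,\,V_{k:n}\in dy)=g(y)\,dy$ and to show that $g(y)/f_{k:n}(y)$ equals the right-hand side of \eqref{eqn:representation_of_condition_expectation} for almost every $y$.

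First I would use the exchangeability of the iid pairs to decompose $\{V_{k:n}\in dy\}$ according to which observation realizes the $k$-th smallest $V$-value. Fixing that observation to be the $n$-th one (this costs a factor $n$) and choosing which $k-1$ of the remaining indices fall below $y$ (a factor $\binom{n-1}{k-1}$) reduces the analysis to the single configuration $V_1,\dots,V_{k-1}<y$, $V_k,\dots,V_{n-1}>y$, $V_n\in dy$. Because the pairs are independent across observations, the $U$-coordinates then split into three conditionally independent blocks: the $k-1$ indices with $V_i<y$ carry $U$-values with conditional df $P(U\le u\mid V\le y)=C(u,y)/y$, the $n-k$ indices with $V_i>y$ carry $U$-values with conditional df $P(U\le u\mid V>y)=(u-C(u,y))/(1-y)$, and the pivotal $n$-th observation carries a $U$-value distributed as $U\mid V=y$. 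Collecting the $n-1$ non-pivotal contributions into $S:=\sum_{i=1}^{k-1}1_{[0,x]}(U_i^{(1)})+\sum_{i=1}^{n-k}1_{[0,x]}(U_i^{(2)})$, the count $\#\set{i:U_i\le x}$ equals $S+1_{[0,x]}(U_n)$, and the factors $y^{k-1}$ and $(1-y)^{n-k}$ produced by the two blocks combine with the (unit) uniform density of $V_n$ to reproduce precisely $f_{k:n}(y)$. Dividing out therefore yields
\[
P(U_{m:n}\le x\mid V_{k:n}=y)=P\left(S+1_{[0,x]}(U_n)\ge m\mid V_n=y\right).
\]

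It then remains to condition on the pivotal value $U_n$. Writing $p:=P(U\le x\mid V=y)$ and using that $S$ is independent of $(U_n,V_n)$, I would split according to whether $U_n\le x$ to obtain $P(S+1_{[0,x]}(U_n)\ge m\mid V_n=y)=(1-p)P(S\ge m)+p\,P(S\ge m-1)$, which simplifies to $P(S\ge m)+p\,P(S=m-1)$. The identity $p=\party$ is exactly the statement that, for a copula, the conditional df of $U$ given $V=y$ is the partial derivative of $C$ in its second argument, since $P(V\le y)=y$ has derivative $1$; this is guaranteed, for almost every $y$ and within the stated range, by Theorem \ref{theo:bounds_for_partial_derivatives_of_copula}. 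Substituting $p=\party$ gives \eqref{eqn:representation_of_condition_expectation}.

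The main obstacle I anticipate is the rigorous treatment of the conditioning on the null event $\{V_{k:n}=y\}$, that is, justifying the block decomposition and the conditional independence at the level of densities rather than heuristically. This is also the source of the ``almost every $x,y$'' qualifier, because both the conditional law $U\mid V=y$ and the partial derivative $\party$ are only defined almost everywhere by Theorem \ref{theo:bounds_for_partial_derivatives_of_copula}. Making the subdensity argument precise, for instance by working with $P(U_{m:n}\le x,\,V_{k:n}\in[y,y+\eps])$ and letting $\eps\downarrow 0$, is the step that demands the most care.
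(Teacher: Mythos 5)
Your argument is correct in outline, but it follows a genuinely different route from the paper. The paper never decomposes by a pivotal observation: it writes $P(U_{m:n}\le x\mid V_{k:n}=y)$ as the limit of the difference quotient of $y\mapsto P(U_{m:n}\le x,\,V_{k:n}\le y)$, expands that joint df exactly as a sum over all pairs of index sets $T$ (where $U_i\le x$) and $S$ (where $V_i\le y$), differentiates each term via $\log$/$\exp$ Taylor expansions, reassembles the resulting linear combination of counting variables into joint dfs of order statistics from a sample of size $n-1$, and only at the very end identifies the outcome as a conditional probability given $\sum_{i=1}^{n-1}1_{[0,y]}(V_i)=k-1$, invoking a point-process conditioning result (Reiss, E.18) to produce the two independent blocks with dfs $C(u,y)/y$ and $(u-C(u,y))/(1-y)$. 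You instead obtain those blocks directly, by the classical order-statistics density argument: fix which observation realizes $V_{k:n}$ (factor $n$), choose the $k-1$ indices below $y$ (factor $\binom{n-1}{k-1}$), exploit independence across observations to get the conditional block dfs immediately, and then condition on the pivot's indicator $1_{[0,x]}(U_n)$ to produce the two-term formula $P(S\ge m)+p\,P(S=m-1)$. Your route is shorter and more transparent; its cost is exactly the step you flag, namely the $o(\eps)$ bookkeeping that justifies replacing $\set{V_{k:n}\in[y,y+\eps]}$ by the single clean configuration (the events with two observations in $[y,y+\eps]$ are $O(\eps^2)$, so this works), whereas the paper keeps the algebra exact at the df level and passes to the limit only once. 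One small inaccuracy to fix: the identity $p=\party$ is not literally Theorem \ref{theo:bounds_for_partial_derivatives_of_copula}, which only gives existence, bounds and monotonicity of the partial derivative; the identification of $\party$ with the conditional df $P(U\le x\mid V=y)$ requires writing $C(x,y)=\int_0^y P(U\le x\mid V=t)\,dt$ (valid because $V$ is uniform) and applying Lebesgue differentiation, which yields the equality for almost every $y$ — this is standard and is precisely where the ``almost every'' qualifier enters, but it deserves to be said explicitly rather than attributed to Nelsen's theorem.
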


If we choose, for example, $m=k=n$, then we obtain from the preceding result the representation
\begin{align*}
P(U_{n:n}\le x\mid V_{n:n}=y) &= \party P\left(\sum_{i=1}^{n-1}1_{[0,x]}\left(U_i^{(1)}\right)= n-1\right)\\
&= \party \frac{C(x,y)^{n-1}}{y^{n-1}}.
\end{align*}

\begin{proof}[Proof of Lemma \ref{lem:conditional_expectation_of_multi_os_I}]
We have
\begin{align*}
&P(U_{m:n}\le x\mid V_{k:n}=y)\\
&= \lim_{\varepsilon\downarrow 0}\frac{P(U_{m:n}\le x,\,V_{k:n}\in [y,y+\varepsilon])}{P(V_{k:n}\in[y,y+\varepsilon])}\\
&= \lim_{\varepsilon\downarrow 0} \frac{P(U_{m:n}\le x,\,V_{k:n}\le y+\varepsilon) - P(U_{m:n}\le x,\,V_{k:n}\le y)}{\varepsilon} \frac{\varepsilon}{P(V_{k:n}\in[y,y+\varepsilon])},
\end{align*}
where the second term on the right hand side above converges to $1/g_{k,n}(y)$ as $\varepsilon\downarrow 0$, where $g_{k,n}(\cdot)$ is the Lebesgue-density of $V_{k:n}$, see, e.g., \citet[Theorem 1.3.2]{reiss89}.

In the next step we will break the set $\set{U_{m:n}\le x,\,V_{k:n}\le y}$ into disjoint subsets. By $T$, $S$ we denote in what follows arbitrary subsets of $\set{1,\dots,n}$ and by $\abs T$, $\abs S$ their cardinalities, i.e., the numbers of their elements. Precisely, we have
\begin{align*}
&P(U_{m:n}\le x,\,V_{k:n}\le y)\\
&= P\left(\sum_{i=1}^n 1_{[0,x]}(U_i)\ge m,\, \sum_{i=1}^n 1_{[0,y]}(V_i)\ge k\right)\\
&= P\Biggl(\Bigl(\sum_{\abs T\ge m}\set{U_i\le x,i\in T;\,U_i>x,i\in T^\complement}\Bigr)\\
&\hspace*{2cm}\cap \Bigl(\sum_{\abs S\ge k}\set{V_i\le y,i\in S;\,V_i>y,i\in S^\complement}\Bigr)\Biggr)\\
&=\sum_{\abs T\ge m}\sum_{\abs S\ge k}  P\Biggl(\set{U_i\le x,i\in T;\,U_i>x,i\in T^\complement}\\
&\hspace*{2cm}\cap \set{V_i\le y,i\in S;\,V_i>y,i\in S^\complement}\Biggr)\\
&=\sum_{\abs T\ge m}\sum_{\abs S\ge k} P(U_i\le x,V_i\le y,\, i\in T\cap S) P\left(U_i\le x,V_i> y,\,i\in T\cap S^\complement\right)\\
&\hspace*{2cm}\times P\left(U_i>x,V_i\le y,\,i\in T^\complement \cap S\right) P\left(U_i>x,V_i>y,\,i\in T^\complement\cap S^\complement\right)\\
&=\sum_{\abs T\ge m}\sum_{\abs S\ge k} P(U\le x,V\le y)^{\abs{T\cap S}} P(U\le x, V>y)^{\abs{T\cap S^\complement}}\\
&\hspace*{3cm}\times P(U>x,V\le y)^{\abs{T^\complement\cap S}} P(U>x,V>y)^{\abs{T^\complement\cap S^\complement}}.
\end{align*}

As a consequence and by writing $x=\exp(\log(x))$ for $x\ge 0$ we obtain
\begin{align*}
&P(U_{m:n}\le x,\,V_{k:n}\le y+\varepsilon) - P(U_{m:n}\le x,\,V_{k:n}\le y)\\
&=\sum_{\abs T\ge m}\sum_{\abs S\ge k}\Biggl\{ \exp\Biggl(\abs{T\cap S}\log(P(U\le x,V\le y+\varepsilon))\\
&\hspace*{4cm} + \abs{T\cap S^\complement} \log(P(U\le x,V>y+\varepsilon))\\
&\hspace*{4cm} + \abs{T^\complement\cap S} \log(P(U> x,V\le y+\varepsilon))\\
&\hspace*{4cm} + \abs{T^\complement\cap S^\complement} \log(P(U> x,V>y+\varepsilon))\Biggr)\\
&\hspace*{3cm} - \exp\Biggl(\abs{T\cap S}\log(P(U\le x,V\le y )\\
&\hspace*{4cm} + \abs{T\cap S^\complement} \log(P(U\le x,V>y ))\\
&\hspace*{4cm} + \abs{T^\complement\cap S} \log(P(U> x,V\le y ))\\
&\hspace*{4cm} + \abs{T^\complement\cap S^\complement} \log(P(U> x,V>y ))\Biggr\}\\
&=\sum_{\abs T\ge m}\sum_{\abs S\ge k} \Biggl\{ \exp\Biggl( \abs{T\cap S} \log\Bigl(1 + \frac{P(U\le x, V\le y+\varepsilon)- P(U\le x,V\le y)}{P(U\le x,V\le y)}\Bigr)\\
&\hspace*{3cm} + \abs{T\cap S^\complement} \log\Bigl(1 + \frac{P(U\le x, V> y+\varepsilon)- P(U\le x,V > y)}{P(U\le x,V>y)}\Bigr)\\
&\hspace*{3cm} + \abs{T^\complement\cap S} \log\Bigl(1 + \frac{P(U> x, V\le y+\varepsilon)- P(U> x,V\le y)}{P(U> x,V\le y)}\Bigr)\\
&\hspace*{3cm} + \abs{T^\complement\cap S^\complement} \log\Bigl(1 + \frac{P(U> x, V> y+\varepsilon)- P(U> x,V> y)}{P(U> x,V> y)}\Bigr)\Biggr)\\
&\hspace*{4cm}- 1\Biggr\}\\
&\hspace*{4cm}\times P(U\le x,V\le y)^{\abs{T\cap S}} P(U\le x, V>y)^{\abs{T\cap S^\complement}}\\
&\hspace*{5cm}\times P(U>x,V\le y)^{\abs{T^\complement\cap S}} P(U>x,V>y)^{\abs{T^\complement\cap S^\complement}}
\end{align*}

We have for $\varepsilon\downarrow 0$ the expansions
\begin{align*}
P(U\le x,V\le y+\varepsilon)- P(U\le x,V\le y)&= \party \varepsilon + o(\varepsilon),\\
P(U\le x,V> y+\varepsilon)- P(U\le x,V> y)&= -\party \varepsilon + o(\varepsilon),\\
P(U> x,V\le y+\varepsilon)- P(U> x,V\le y)&=\left(1 -\party\right) \varepsilon + o(\varepsilon),\\
P(U> x,V> y+\varepsilon)- P(U> x,V> y)&=\left(\party-1\right) \varepsilon + o(\varepsilon).
\end{align*}

From the Taylor expansions $\log(1+x)=x+o(x)$ and $\exp(x)-1=x+o(x)$ as $x\to 0$ we, thus, obtain from the preceding equations
\begin{align*}
&\frac{P(U_{m:n}\le x,V_{k:n}\le y+\varepsilon) - P(U_{m:n}\le x,V_{k:n}\le y)}\varepsilon\\
&\to_{\varepsilon\downarrow 0}   \sum_{\abs T\ge m} \sum_{\abs S\ge k} \Biggl\{ \abs{T\cap S}\frac{\party}{p_1}\\
&\hspace*{4cm} - \abs{T\cap S^\complement} \frac{\party}{p_2}\\
&\hspace*{4cm} + \abs{T^\complement\cap S} \frac{1-\party}{p_3}\\
&\hspace*{4cm} + \abs{T^\complement\cap S^\complement} \frac{\party-1}{p_4}\Biggr\}\\
&\hspace*{4cm}\times p_1^{\abs{T\cap S}} p_2^{\abs{T\cap S^\complement}}
 p_3^{\abs{T^\complement\cap S}} p_4^{\abs{T^\complement\cap S^\complement}}\\
&=:  f(x,y)
\end{align*}
with
\[
p_1:=P((U,V)\in A_1):=P(U\le x,V\le y),\;p_2:=P((U,V)\in A_2):=P(U\le x,V>y),
\]
\[
p_3:=P((U,V)\in A_3):= P(U>x,V\le y),\; p_4:= P((U,V)\in A_4):= P(U>x,V>y).
\]
Note that $p_1+p_2+p_3+p_4=1$. Set
\[
n_j:=\sum_{i=1}^n 1_{A_j}(U_i,V_i),\qquad 1\le j\le 4.
\]
Then we obtain
\begin{align*}
f(x,y)&= E\Biggl(\Biggl\{ \frac{\party}{p_1}n_1 - \frac{\party}{p_2}n_2\\
&\hspace*{4cm} + \frac{1-\party}{p_3}n_3 + \frac{\party-1}{p_4}n_4\Biggr\}\\
&\hspace*{5cm}\times 1(n_1+n_2\ge m,n_1+n_3\ge k)\Biggr).
\end{align*}

Put, for notational convenience, $\xi_j:= (U_j,V_j)$, $1\le j\le n$. We have
\begin{align*}
&E(n_1 1(n_1+n_2\ge m, n_1+n_3\ge k))\\
&=\sum_{j=1}^n P\left(\set{\xi_j\in A_1}\cap\set{\sum_{i=1}^n1_{A_1\cup A_2}(\xi_i)\ge m, \sum_{i=1}^n 1_{A_1\cup A_3}(\xi_i)\ge k}\right)\\
&= n p_1 P\left(\sum_{i=1}^{n-1}1_{A_1\cup A_2}(\xi_i)\ge m-1, \sum_{i=1}^{n-1} 1_{A_1\cup A_3}(\xi_i)\ge k-1 \right)\\
&= n p_1 P\left(\sum_{i=1}^{n-1}1_{[0,x]}(U_i)\ge m-1, \sum_{i=1}^{n-1}1_{[0,y]}(V_i)\ge k-1 \right),
\end{align*}
as well as
\begin{align*}
&E(n_2 1(n_1+n_2\ge m, n_1+n_3\ge k))\\
&=n p_2 P\left(\sum_{i=1}^{n-1}1_{A_1\cup A_2}(\xi_i)\ge m-1, \sum_{i=1}^{n-1} 1_{A_1\cup A_3}(\xi_i)\ge k \right)\\
&= n p_2 P\left(\sum_{i=1}^{n-1}1_{[0,x]}(U_i)\ge m-1, \sum_{i=1}^{n-1}1_{[0,y]}(V_i)\ge k \right),
\end{align*}

\begin{align*}
&E(n_3 1(n_1+n_2\ge m, n_1+n_3\ge k))\\
&=n p_3 P\left(\sum_{i=1}^{n-1}1_{A_1\cup A_2}(\xi_i)\ge m, \sum_{i=1}^{n-1} 1_{A_1\cup A_3}(\xi_i)\ge k-1 \right)\\
&= n p_3 P\left(\sum_{i=1}^{n-1}1_{[0,x]}(U_i)\ge m, \sum_{i=1}^{n-1}1_{[0,y]}(V_i)\ge k-1 \right),
\end{align*}
and, finally,
\begin{align*}
&E(n_4 1(n_1+n_2\ge m, n_1+n_3\ge k))\\
&=n p_4 P\left(\sum_{i=1}^{n-1}1_{A_1\cup A_2}(\xi_i)\ge m, \sum_{i=1}^{n-1} 1_{A_1\cup A_3}(\xi_i)\ge k \right)\\
&= n p_4 P\left(\sum_{i=1}^{n-1}1_{[0,x]}(U_i)\ge m, \sum_{i=1}^{n-1}1_{[0,y]}(V_i)\ge k \right).
\end{align*}

Altogether we obtain from the preceding equations
\begin{align*}
f(x,y) &= n \party  P(U_{m-1:n-1}\le x, V_{k-1:n-1}\le y)\\
&\hspace*{.5cm} -n \party P(U_{m-1:n-1}\le x, V_{k:n-1}\le y)\\
&\hspace*{.5cm} +n\left(1-\party\right) P(U_{m:n-1}\le x, V_{k-1:n-1}\le y)\\
&\hspace*{.5cm} -n\left(1-\party\right) P(U_{m:n-1}\le x, V_{k:n-1}\le y)\\
&= n \party \Big( P(U_{m-1:n-1}\le x,\,V_{k-1:n-1}\le y)\\
&\hspace*{4cm}- P(U_{m-1:n-1}\le x,\,V_{k:n-1}\le y)\Big)\\
&\hspace*{0.5cm}+ n \left(1-\party\right)\Big(P(U_{m:n-1}\le x,\,V_{k-1:n-1}\le y)\\
&\hspace*{4cm}- P(U_{m:n-1}\le x,\,V_{k:n-1}\le y)\Big),
\end{align*}

We, thus, have established so far
\begin{align*}
&P(U_{m:n}\le x\mid V_{k:n}=y)\\
&= \frac{n}{g_{k,n}(y)}\Biggl\{\party P\left(U_{m-1:n-1}\le x,\sum_{i=1}^{n-1}1_{(0,y]}(V_i)=k-1\right)\\
&\hspace{1cm}+ \left(1-\party\right) P\left(U_{m:n-1}\le x,\, \sum_{i=1}^{n-1}1_{(0,y]}(V_i)=k-1\right)\Biggr\}\\
&=  \frac{n}{g_{k,n}(y)}\Biggl\{ P\left(\sum_{i=1}^{n-1}1_{[0,x]}(U_i)\ge m,\sum_{i=1}^{n-1}1_{(0,y]}(V_i)=k-1\right)\\
&\hspace{1cm}+  \party P\left(\sum_{i=1}^{n-1}1_{[0,x]}(U_i)= m-1,\sum_{i=1}^{n-1}1_{(0,y]}(V_i)=k-1\right)\Biggr\}.
\end{align*}

From the fact that
\[
P\left(\sum_{i=1}^{n-1}1_{[0,y]}(V_i)=k-1\right) = \frac{g_{k,n}(y)}n
\]
we, thus, obtain the representation
\begin{align*}
&P(U_{m:n}\le x\mid V_{k:n}=y)\\
&= P\left(\sum_{i=1}^{n-1}1_{[0,x]}(U_i)\ge m \mathrel{\Big|} \sum_{i=1}^{n-1} 1_{[0,y]}(V_i)=k-1\right)\nonumber\\
&\hspace*{1cm}+ \party P\left(\sum_{i=1}^{n-1}1_{[0,x]}(U_i)=m-1 \mathrel{\Big|} \sum_{i=1}^{n-1} 1_{[0,y]}(V_i)=k-1\right).
\end{align*}

We know from the theory of point processes (see, e.g. \citet[E.18]{reiss93}) that
\begin{align*}
&P\left(\sum_{i=1}^{n-1}1_{[0,x]}(U_i)\ge m \mathrel{\Big|} \sum_{i=1}^{n-1} 1_{[0,y]}(V_i)=k-1\right)\\
&=P\left(\sum_{i=1}^{n-1}1_{[0,x]}(U_i)\ge m \mathrel{\Big|} \sum_{i=1}^{n-1} 1_{[0,y]}(V_i)=k-1, \sum_{i=1}^{n-1} 1_{(y,1]}(V_i)=n-k\right)\\
&= P\left(\sum_{i=1}^{k-1}1_{[0,x]}\left(U_i^{(1)}\right) + \sum_{i=1}^{n-k} 1_{[0,x]}\left(U_i^{(2)}\right)\ge m\right),
\end{align*}
where $U_1^{(1)},\dots,U_{k-1}^{(1)},U_1^{(2)},\dots,U_{n-k}^{(2)}$ are independent rv with
\[
P\left(U_i^{(1)}\le u\right) = P(U\le u\mid V\le y) = \frac{C(u,y)}y
\]
and
\[
P\left(U_i^{(2)}\le u \right)  = P(U\le u\mid V> y) = \frac{u-C(u,y)}{1-y},\qquad 0\le u\le 1,
\]
which completes the proof of Lemma \ref{lem:conditional_expectation_of_multi_os_I}.
\end{proof}

\section{Asymptotic Independence of Order Statistics}

Throughout this section, $(U_{r:n},V_{k:n})$ denotes a rv of componentwise taken os pertaining to independent copies $(U_1,V_1),\dots,(U_n,V_n)$ of a rv $(U,V)$, which follows a copula $C$. By $X,Y,\eta_j$ we denote independent rv, where $X$ and $Y$ are standard normal distributed and $\eta_j$ has df $G_j(x)=\exp(x)\sum_{i=0}^{j-1}(-x)^i/i!$, $x\le 0$. The following main result establishes asymptotic independence of $U_{r:n}$ and $V_{k:n}$ for various sequences $r=r(n)$, $k=k(n)$, $n\in\N$.

\begin{theorem}
Let $k=k(n)$, $j=j(n)\in\set{1,\dots,n}$, $n\in\N$.
\begin{itemize}[leftmargin=.7cm]
\item[(i)] If $k$ satisfies $k\to_{n\to\infty}\infty$, $k/n\to_{n\to\infty}0$, then, for fixed $j\in\N$,
    \[
    \left(\frac n{\sqrt k}\left(U_{n-k+1:n}- \frac{n-k+1}{n+1}\right), n(V_{n-j+1:n}-1)\right) \to_D (X,\eta_j).
    \]
\item[(ii)] With $k$ and $j$ as in (i),
    \[
    \left(\frac n{\sqrt k}\left(U_{k:n}- \frac{k}{n+1}\right), n(V_{n-j+1:n}-1)\right) \to_D (X,\eta_j).
    \]
\item[(iii)]   If $k$ satisfies $k/n\to_{n\to\infty}\lambda\in(0,1)$ and $j\in\N$ is fixed, then
    \[
    \left(\sqrt n\left(U_{k:n} - \frac k{n+1}\right), n(V_{n-j+1:n}-1)\right) \to_D \left((\lambda(1-\lambda))^{1/2} X,\eta_j\right).
    \]
\item[(iv)]    With $k$ is chosen as in (iii) and $j\to_{n\to\infty}\infty$, $j/n\to_{n\to\infty}0$
    \begin{align*}
    &\left(\sqrt n\left(U_{k:n} - \frac k{n+1}\right), \frac n{\sqrt j}\left(V_{n-j+1:n}-\frac{n-j+1}{n+1}\right)\right)\\
     &\to_D \left((\lambda(1-\lambda))^{1/2} X,Y\right).
    \end{align*}
\item[(v)] With $k$ as chosen in (i), $j$ chosen as in (iv) and, in addition, $j/\sqrt  k\to_{n\to\infty}0$, 
    \[
    \left(\frac n{\sqrt k}\left(U_{n-k+1:n}-\frac{n-k+1}{n+1}\right), \frac n{\sqrt j}\left(V_{n-j+1:n}-\frac{n-j+1}{n+1}\right)  \right)\to_D (X,Y).
    \]
\end{itemize}
\end{theorem}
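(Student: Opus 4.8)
The plan is to obtain each of the five limits by conditioning on the upper order statistic $V_{n-j+1:n}$ and showing, via Lemma~\ref{lem:conditional_expectation_of_multi_os_I}, that the conditional law of the normalized $U$-order statistic converges to one and the same Gaussian limit for almost every admissible value of the conditioning variable. Since the marginal law of $V_{n-j+1:n}$ converges to $\eta_j$ (when $j$ is fixed) or, after normalization, to the standard normal $Y$ (when $j\to\infty$), the joint law then factorizes and asymptotic independence follows. Writing $m$ for the $U$-index and $g_{n-j+1,n}$ for the Lebesgue density of $V_{n-j+1:n}$, I would start from
\[
P(U_{m:n}\le x,\,V_{n-j+1:n}\le \tilde y)=\int_0^{\tilde y}P\bigl(U_{m:n}\le x\mid V_{n-j+1:n}=y\bigr)\,g_{n-j+1,n}(y)\,dy,
\]
so that it suffices to show that $P(U_{m:n}\le x\mid V_{n-j+1:n}=y)$ and the marginal $P(U_{m:n}\le x)$ have the same limit, uniformly over the set of $y$ carrying asymptotically all the mass of $V_{n-j+1:n}$ (namely $1-y=O(1/n)$ when $j$ is fixed, and $1-y$ of order $j/n$ when $j\to\infty$).

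The next step is to insert the representation of Lemma~\ref{lem:conditional_expectation_of_multi_os_I}. With $V$-index $n-j+1$, the conditional probability equals $P(S_n(x,y)\ge m)+\party\,P(S_n(x,y)=m-1)$, where $S_n(x,y)=\sum_{i=1}^{n-j}B_i^{(1)}+\sum_{i=1}^{j-1}B_i^{(2)}$ is a sum of independent Bernoulli variables with success probabilities $p_1=C(x,y)/y$ for the $n-j$ ``type-1'' terms and $p_2=(x-C(x,y))/(1-y)$ for the $j-1$ ``type-2'' terms. In every part the $U$-statistic is central or intermediate, so $\mathrm{Var}(S_n)\to\infty$; a local limit theorem then forces the point mass $P(S_n=m-1)\to0$, and together with the bound $0\le\party\le1$ of Theorem~\ref{theo:bounds_for_partial_derivatives_of_copula} this removes the second term. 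A Lyapunov--Berry--Esseen bound for non-identically distributed bounded summands gives $P(S_n\ge m)=\Phi\bigl((E[S_n]-m)/\sqrt{\mathrm{Var}(S_n)}\bigr)+o(1)$, so everything reduces to matching the conditional mean and variance of $S_n$ with the unconditional quantities $nx$ and $nx(1-x)$ governing the marginal limit $P(\mathrm{Bin}(n,x)\ge m)$.

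For the mean I would exploit the exact identity
\[
E\bigl[S_n(x,y)\bigr]-(n-1)x=\bigl(C(x,y)-xy\bigr)\Bigl(\tfrac{n-j}{y}-\tfrac{j-1}{1-y}\Bigr),
\]
in which the bracket vanishes exactly at $1-y=(j-1)/(n-1)$, i.e.\ at the mean of $1-V_{n-j+1:n}$, while $|C(x,y)-xy|=O(1-y)$ by the Fr\'echet--Hoeffding inequalities. This is where the hypotheses separate the cases. When $j$ is fixed (parts (i)--(iii)) only $j-1$ type-2 terms are present and $p_1=C(x,y)/y\to x$ uniformly as $y\to1$, so the conditional mean and variance agree with the unconditional ones up to $O(1)$, which is negligible against the scales $\sqrt k$ (intermediate $U$) and $\sqrt n$ (central $U$). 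For central $U$ with intermediate $V$ (part (iv)) the cancellation in the identity, controlled through $j/n\to0$, keeps the mean shift below the scale $\sqrt n$. For the intermediate--intermediate case (part (v)) the $j-1$ type-2 terms can displace the count above $x$ by as much as $j$, which must be negligible against the intermediate scale $\sqrt k$, and this is exactly what the extra assumption $j/\sqrt k\to0$ provides.

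I expect this last point to be the main obstacle: controlling the effect of the $j-1$ ``contaminated'' type-2 observations on an intermediate $U$-order statistic for an arbitrary copula, where only $0\le\party\le1$ is available and no smoothness of $C$ may be assumed. The cleanest route is a coupling/replacement estimate bounding the sup-distance between the law of $S_n(x,y)$ and that of the pure uniform count $\mathrm{Bin}(n-1,x)$ by a constant multiple of $j/\sqrt k$, uniformly over the typical range of $y$. Combining this bound with the Berry--Esseen step and the known marginal convergences of $V_{n-j+1:n}$ (to $\eta_j$, respectively to $Y$ after normalization) yields the stated product limits, and hence asymptotic independence, in all five regimes.
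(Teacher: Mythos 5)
Your proposal is correct and follows essentially the same route as the paper's proof: both condition on the $V$-order statistic, insert the representation of Lemma~\ref{lem:conditional_expectation_of_multi_os_I}, show via a CLT for the resulting triangular array of Bernoulli sums (using $0\le\party\le1$ to control the copula terms, exactly as the paper does through the expansion of $1-p_n$) that the conditional probability tends to $\Phi(x)$ for every admissible conditioning value, and then integrate against the law of $V_{n-j+1:n}$, whose marginal limits ($\eta_j$, resp.\ $Y$) give the product form. Your treatment is in places more explicit than the paper's --- the vanishing of the point-mass term $\party\,P(S_n=m-1)$, the exact mean-shift identity, and the role of $j/\sqrt k\to0$ in part (v) as bounding the displacement caused by the $j-1$ ``type-2'' summands --- but these are refinements of the same argument, not a different one.
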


More results can immediately be deduced from the preceding result by noting that $(1-U_{r:n},1-V_{k:n})= \left(\bar U_{n-r+1:n},\bar V_{n-k+1:n}\right)$, which are os pertaining to the iid sequence $(\bar U_1,\bar V_1),\dots,(\bar U_n,\bar V_n)=(1-U_1,1-V_1),\dots,(1-U_n,1-V_n)$ with copula $\bar C(u,v)= P(1-U\le u,1-V\le v)$.

\begin{proof}
We prove only assertion (i). The remaining parts can be shown in complete analogy. By $P*X$ we denote in what follows the distribution of a rv $X$, i.e., $(P*X)(B)=P(X\in B)$ for any $B$ in the Borel-$\sigma$-field of $\R$. We have with $\mu_n:=(n-k+1)/(n+1)$ and $x\in\R$, $y<0$ by \eqref{eqn:representation_of_condition_expectation} the representation
\begin{align}\label{eqn:expansion_of_probability_via_cond_exp}
&P\left(\frac n{\sqrt k}(U_{n-k+1:n}-\mu_n)\le x, n(V_{n-j+1:n}-1)\le y\right)\nonumber\\
&= \int_{-n}^y P\left(\frac n{\sqrt k}(U_{n-k+1:n}-\mu_n)\le x \mid n (V_{n-j+1:n}-1)=z \right)\nonumber\\
&\hspace*{7cm} (P*n (V_{n-j+1:n}-1))(dz)\nonumber\\
&= \int_{-n}^yP\left(U_{n-k+1:n}\le \frac{\sqrt k}n x + \mu_n \mid V_{n-j+1:n} = 1 + \frac zn\right)\nonumber\\
&\hspace*{7cm} (P*n (V_{n-j+1:n}-1))(dz)\nonumber\\
&= \int_{-n}^y P\left(\sum_{i=1}^{n-j}1_{\left[0,\frac{\sqrt k}n x +\mu_n\right]}\left(U_i^{(1)}\right) + \sum_{i=1}^{j-1} 1_{\left[0,\frac{\sqrt k}n x +\mu_n\right]}\left(U_i^{(2)}\right) \ge n-k+1 \right)\nonumber\\
&\hspace*{1cm} + \frac{\partial}{\partial y}C(x,1+\frac{z}{n}) \nonumber\\
&\hspace*{2cm}\times P\left(\sum_{i=1}^{n-j}1_{\left[0,\frac{\sqrt k}n x +\mu_n\right]}\left(U_i^{(1)}\right) + \sum_{i=1}^{j-1} 1_{\left[0,\frac{\sqrt k}n x +\mu_n\right]}\left(U_i^{(2)}\right) = n-k \right)\nonumber\\
&\hspace*{7cm} (P*n (V_{n-j+1:n}-1))(dz).
\end{align}

It is well known that $n (V_{n-j+1:n}-1)\to_D G_j$, see, e.g. equation (5.1.28) in \citet{reiss89}.

We claim that
\begin{align*}
P\left(\sum_{i=1}^{n-j}1_{\left[0,\frac{\sqrt k}n x +\mu_n\right]}\left(U_i^{(1)}\right) + \sum_{i=1}^{j-1} 1_{\left[0,\frac{\sqrt k}n x +\mu_n\right]}\left(U_i^{(2)}\right) \ge n-k+1 \right) \to_{n\to\infty} \Phi(x),
\end{align*}
where $\Phi(\cdot)$ denotes the df of the standard normal distribution.

Note that
\[
p_n:= P\left(U_i^{(1)}\le \frac{\sqrt k}n x+\mu_n \right) = \frac{C\left( \frac{\sqrt k}n x+\mu_n, 1+\frac zn\right)}{1+\frac zn} \to_{n\to\infty} 1
\]
and
\begin{align*}
1-p_n&= \frac{1+\frac zn- C\left( \frac{\sqrt k}n x+\mu_n, 1+\frac zn\right)}{1+\frac zn}\\
&= \frac{1+\frac zn- \frac{\sqrt k}n x-\mu_n +\left(\frac{\sqrt k}n x+\mu_n - C\left( \frac{\sqrt k}n x+\mu_n, 1+\frac zn\right) \right)}{1+\frac zn}\\
&= \frac{\frac zn -\frac{\sqrt k}n x+ \frac k{n+1} +\int_{1+z/n}^1 \frac{\partial}{\partial v}C\left( \frac{\sqrt k}n x+\mu_n, v\right) \,dv}{1+\frac zn}\\
&= \frac{-\frac{\sqrt k}n x + \frac k{n+1} + O\left(\frac zn\right)}{1+\frac zn}
\end{align*}
by Theorem \ref{theo:bounds_for_partial_derivatives_of_copula}. We obtain that $(n-j)p_n(1-p_n)$ is of order $k(n)$ as $n\to\infty$ and, thus, the central limit theorem for arrays of Binomial distributions implies
\[
\frac{\sum_{i=1}^{n-j}\left(1_{\left[0,\frac{\sqrt k}n x+\mu_n\right]}\left(U_i^{(1)}\right) - p_n\right)  }{\left((n-j)p_n(1-p_n)\right)^{1/2}} \to_D N(0,1).
\]

As a consequence we obtain
\begin{align*}
&P\left(\sum_{i=1}^{n-j}1_{\left[0,\frac{\sqrt k}n x+\mu_n\right]}\left(U_i^{(1)}\right) + \sum_{i=1}^{j-1}1_{\left[0,\frac{\sqrt k}n x+\mu_n\right]}\left(U_i^{(2)}\right)\ge n-k+1\right)\\
&= P\left(\frac{\sum_{i=1}^{n-j}\left(1_{\left[0,\frac{\sqrt k}n x+\mu_n\right]}\left(U_i^{(1)}\right) - p_n\right)  }{\left((n-j)p_n(1-p_n)\right)^{1/2}} +o(1) \ge \frac{n-k+1 -(n-j)p_n}{\left((n-j)p_n(1-p_n)\right)^{1/2}}  \right)\\
&\to_{n\to\infty} 1-\Phi(-x) = \Phi(x),
\end{align*}
since
\[
\frac{n-k+1 -(n-j)p_n}{\left((n-j)p_n(1-p_n)\right)^{1/2}} = \frac{n(1-p_n)-k +O(1)}{\sqrt k (1+o(1))} = -x + o(1).
\]
This implies that the integrand in representation \eqref{eqn:expansion_of_probability_via_cond_exp} converges to $\Phi(x)$. The assertion now follows from the dominated convergence theorem.
\end{proof}

\section*{Acknowledgment} The authors are indebted to Professor Gennady Samorodnitsky for raising the problem investigated in this paper during the workshop \textit{extreme value and time series analysis}, 21-23 March 2016, Karlsruhe Institute of Technology, Germany.

\bibliographystyle{enbib_arXiv}
\bibliography{evt}

\begin{thebibliography}{14}
\providecommand{\natexlab}[1]{#1}
\catcode`\_=\active \def_{\relax\ifmmode\sb \else \_\fi}
\providecommand{\UrlFont}{\ttfamily}
\providecommand{\url}[1]{{\UrlFont #1}}
\providecommand{\urlprefix}{}
\providecommand{\href}[2]{{\UrlFont #2}}
\providecommand{\doi}[1]{\href{http://dx.doi.org/#1}{{\UrlFont doi:#1}}}
\providecommand{\selectlanguage}[1]{\relax}
\providecommand{\bibinfo}[2]{#2}
\providecommand{\eprint}[2][]{\href{#1}{#2}}

\bibitem[{Aulbach et~al.(2014)Aulbach, Falk, Hofmann, and Zott}]{aulfahozo14}
\bibinfo{author}{\textsc{Aulbach, S.}}, \bibinfo{author}{\textsc{Falk, M.}},
  \bibinfo{author}{\textsc{Hofmann, M.}}, and \bibinfo{author}{\textsc{Zott,
  M.}} (\bibinfo{year}{2014}).
\newblock \bibinfo{title}{Max-stable processes and the functional
  \uppercase{$D$}-norm revisited}.
\newblock \textit{\bibinfo{journal}{Extremes}} \textbf{\bibinfo{volume}{18}},
  \bibinfo{pages}{191--212}.
\newblock
\newblock \doi{10.1007/s10687-014-0210-0}.

\bibitem[{Aulbach et~al.(2015)Aulbach, Falk, and Zott}]{aulfazo14}
\bibinfo{author}{\textsc{Aulbach, S.}}, \bibinfo{author}{\textsc{Falk, M.}},
  and \bibinfo{author}{\textsc{Zott, M.}} (\bibinfo{year}{2015}).
\newblock \bibinfo{title}{The space of \uppercase{$D$}-norms revisited}.
\newblock \textit{\bibinfo{journal}{Extremes}} \textbf{\bibinfo{volume}{18}},
  \bibinfo{pages}{85--97}.
\newblock
\newblock \doi{10.1007/s10687-014-0204-y}.

\bibitem[{Cheng et~al.(1997)Cheng, de~Haan, and Yang}]{chedehya97}
\bibinfo{author}{\textsc{Cheng, S.}}, \bibinfo{author}{\textsc{de~Haan, L.}},
  and \bibinfo{author}{\textsc{Yang, J.}} (\bibinfo{year}{1997}).
\newblock \bibinfo{title}{Asymptotic distributions of multivariate intermediate
  order statistics}.
\newblock \textit{\bibinfo{journal}{Theory Probab. Appl.}}
  \textbf{\bibinfo{volume}{41}}, \bibinfo{pages}{646--656}.
\newblock
\newblock \doi{10.1137/S0040585X97975733}.

\bibitem[{Falk(2015)}]{falk15}
\bibinfo{author}{\textsc{Falk, M.}} (\bibinfo{year}{2015}).
\newblock \bibinfo{title}{On idempotent d-norms}.
\newblock \textit{\bibinfo{journal}{J. Multivariate Anal.}}
\newblock \textbf{\bibinfo{volume}{139}}, \bibinfo{pages}{283--294}.

\bibitem[{Falk et~al.(2011)Falk, H{\"u}sler, and Reiss}]{fahure10}
\bibinfo{author}{\textsc{Falk, M.}}, \bibinfo{author}{\textsc{H{\"u}sler, J.}},
  and \bibinfo{author}{\textsc{Reiss, R.-D.}} (\bibinfo{year}{2011}).
\newblock \textit{\bibinfo{title}{Laws of Small Numbers: Extremes and Rare
  Events}}.
\newblock \bibinfo{edition}{3rd} ed. \bibinfo{publisher}{Springer},
  \bibinfo{address}{Basel}.
\newblock
\newblock \doi{10.1007/978-3-0348-0009-9}.

\bibitem[{Falk and Kohne(1986)}]{falkko86}
\bibinfo{author}{\textsc{Falk, M.}}, and \bibinfo{author}{\textsc{Kohne, W.}}
  (\bibinfo{year}{1986}).
\newblock \bibinfo{title}{On the rate at which the sample extremes become
  independent}.
\newblock \textit{\bibinfo{journal}{Ann. Probab.}}
\newblock \textbf{\bibinfo{volume}{14}}, \bibinfo{pages}{1339--1346}.

\bibitem[{Falk and Reiss(1988)}]{falkre88}
\bibinfo{author}{\textsc{Falk, M.}}, and \bibinfo{author}{\textsc{Reiss,
  R.-D.}} (\bibinfo{year}{1988}).
\newblock \bibinfo{title}{Independence of order statistics}.
\newblock \textit{\bibinfo{journal}{Ann. Probab.}}
\newblock \textbf{\bibinfo{volume}{16}}, \bibinfo{pages}{854--862}.

\bibitem[{Falk and Wisheckel(2016)}]{falkwi16}
\bibinfo{author}{\textsc{Falk, M.}}, and \bibinfo{author}{\textsc{Wisheckel,
  F.}} (\bibinfo{year}{2016}).
\newblock \bibinfo{title}{Multivariate order statistics: the intermediate
  case}.
\newblock
\newblock \bibinfo{type}{Tech. Rep.}

\bibitem[{Galambos(1975)}]{gal75}
\bibinfo{author}{\textsc{Galambos, J.}} (\bibinfo{year}{1975}).
\newblock \bibinfo{title}{Order statistics of samples from multivariate
  distributions}.
\newblock \textit{\bibinfo{journal}{Journal of the American Statistical
  Association}}
\newblock \textbf{\bibinfo{volume}{70}}, \bibinfo{pages}{674--680}.

\bibitem[{Galambos(1987)}]{gal87}
\bibinfo{author}{\textsc{Galambos, J.}} (\bibinfo{year}{1987}).
\newblock \textit{\bibinfo{title}{The Asymptotic Theory of Extreme Order
  Statistics}}.
\newblock \bibinfo{edition}{2nd} ed.
\newblock \bibinfo{publisher}{Krieger}, \bibinfo{address}{Malabar}.

\bibitem[{Gumbel(1946)}]{gumbel46}
\bibinfo{author}{\textsc{Gumbel, E.~J.}} (\bibinfo{year}{1946}).
\newblock \bibinfo{title}{On the independence of the extremes in a sample}.
\newblock \textit{\bibinfo{journal}{Ann. Math. Statist.}}
\newblock \textbf{\bibinfo{volume}{17}}, \bibinfo{pages}{78--81}.

\bibitem[{Nelsen(2006)}]{nel06}
\bibinfo{author}{\textsc{Nelsen, R.~B.}} (\bibinfo{year}{2006}).
\newblock \textit{\bibinfo{title}{An Introduction to Copulas}}.
\newblock Springer Series in Statistics, \bibinfo{edition}{2nd} ed.
  \bibinfo{publisher}{Springer}, \bibinfo{address}{New York}.
\newblock
\newblock \doi{10.1007/0-387-28678-0}.

\bibitem[{Reiss(1989)}]{reiss89}
\bibinfo{author}{\textsc{Reiss, R.-D.}} (\bibinfo{year}{1989}).
\newblock \textit{\bibinfo{title}{Approximate Distributions of Order
  Statistics: With Applications to Nonparametric Statistics}}.
\newblock Springer Series in Statistics. \bibinfo{publisher}{Springer},
  \bibinfo{address}{New York}.
\newblock
\newblock \doi{10.1007/978-1-4613-9620-8}.

\bibitem[{Reiss(1993)}]{reiss93}
\bibinfo{author}{\textsc{Reiss, R.-D.}} (\bibinfo{year}{1993}).
\newblock \textit{\bibinfo{title}{A Course on Point Processes}}.
\newblock \bibinfo{publisher}{Springer}, \bibinfo{address}{New York}.
\newblock
\newblock \doi{10.1007/978-1-4613-9308-5}.

\end{thebibliography}

\end{document}